\newtheorem{theoremA}{Theorem}
\numberwithin{equation}{section}
\theoremstyle{plain}
\numberwithin{equation}{section}
\newtheorem{theorem}[equation]{Theorem}
\newtheorem{corollary}[equation]{Corollary}
\newtheorem{lemma}[equation]{Lemma}
\theoremstyle{definition}
\newtheorem{example}[equation]{Example}
\theoremstyle{definition}
\newtheorem{remark}[equation]{Remark}
\theoremstyle{definition}
\newcommand{\C}{\mathscr{C}}
\newcommand\id{\operatorname{id}}
\newcommand\Ext{\operatorname{Ext}}
\newcommand\Tor{\operatorname{Tor}}
\newcommand\ev{\operatorname{ev}}
\newcommand\odd{\operatorname{odd}}
\newcommand\lcm{\operatorname{lcm}}
\newcommand\op{\operatorname{op}}
\newcommand\ot{\otimes}
\renewcommand\mod{\operatorname{mod}}
\newcommand\Hom{\operatorname{Hom}}
\newcommand\stHom{\operatorname{\underline{Hom}}}
\newcommand\Hoch{\operatorname{HH}}
\newcommand\Der{\operatorname{D}}
\newcommand\HH{\Hoch}
\newcommand{\Coh}{\operatorname{H}\nolimits}
\newcommand{\az}{\mathfrak{a}}
\newcommand{\m}{\mathfrak{m}}
\newcommand{\p}{\mathfrak{p}}
\newcommand{\diagram}[3]{\matrix (#1) [matrix of math nodes,row
  sep={#2},column sep={#3},text height=1.5ex,text
  depth=0.25ex]}
\def\blx@maxline{77}
\begin{document}
\title{Asymptotic vanishing of cohomology in triangulated categories}

\author{Petter Andreas Bergh, David A.\ Jorgensen, Peder Thompson}

\address[P. A. Bergh]{Institutt for matematiske fag, NTNU, N-7491 Trondheim, Norway}
\email{petter.bergh@ntnu.no}
\urladdr{https://www.ntnu.edu/employees/petter.bergh}
\address[D. A. Jorgensen]{Department of Mathematics, University of Texas at Arlington, 411 S. Nedderman Drive, Pickard Hall 429, Arlington, TX 76019, U.S.A.}
\email{djorgens@uta.edu}
\urladdr{http://www.uta.edu/faculty/djorgens/}
\address[P. Thompson]{Division of Mathematics and Physics, M{\"a}lardalen University, V{\"a}ster{\aa}s, Sweden}
\email{peder.thompson@mdu.se}
\urladdr{https://www.mdu.se/en/malardalen-university/staff?id=ptn04}

\subjclass[2020]{16E30, 18G15, 18G80}
\keywords{Triangulated categories, vanishing of cohomology}

\begin{abstract}
Given a graded-commutative ring acting centrally on a triangulated category, our main result shows that if cohomology of a pair of objects of the triangulated category is finitely generated over the ring acting centrally, then the asymptotic vanishing of the cohomology is well-behaved.  In particular, enough consecutive asymptotic vanishing of cohomology implies all eventual vanishing.  Several key applications are also given. 
\end{abstract}

\maketitle

\section{Introduction}
The vanishing of homology and cohomology is a basic consideration in algebra. There have been many recent accounts of asymptotic vanishing of homology over certain commutative rings in the literature, essentially beginning with  \cite{Auslander1961}; see also \cite{Murthy1963}, \cite{Lichtenbaum1966}, \cite{HunekeWiegand1994}, \cite{Jorgensen2001}, \cite{HunekeJorgensenWiegand2001}, 
\cite{BerghJorgensen2011}, \cite{Celikbas2011}, \cite{Bergh2013}, \cite{Dao2013}, \cite{CelikbasIyengarPiepmeyerWiegand2015}. Several recent articles have turned to investigations of asymptotic vanishing of cohomology \cite{Jorgensen1999}, \cite{Bergh2008}, \cite{Bergh2011}, \cite{CelikbasDao2011}, \cite{AvramovIyengar2008}. A common thread to these articles is that they have made some appeals to a Noetherian condition on the totality of cohomology.  In this paper we distill these concepts into the context of triangulated categories, where cohomology---unlike homology---is naturally defined, and plays a prominent role.  We show that if the triangulated category admits a central ring action over which the cohomology is eventually Noetherian, then asymptotic vanishing is rather well-behaved, in the sense that sporadic vanishing cannot occur.  Specifically, our main result is the following.

\begin{theoremA}
Let $( \C, \Sigma, \Delta )$ be a triangulated category with a central ring action from a graded-commutative non-negatively graded ring $H$, and $A,B \in \C$ two objects such that the $H$-module $\Hom_{\C}^{\ge n_0}(A,B)$ is Noetherian for some $n_0 \in \mathbb Z$. Assume that one of the following holds:
\begin{enumerate}
\item[\emph{(1)}] $\Hom_{\C}(A, \Sigma^n B)$ has finite length over $H^0$ for $n \gg 0$;
\item[\emph{(2)}] $H^0$ contains an infinite field, or is local with an infinite residue field.
\end{enumerate}
Then either $\Hom_{\C}(A, \Sigma^n B) = 0$ for all $n\gg 0$, or there exist integers $d>0$ and $m_0$ 
such that $\Hom_{\C}(A, \Sigma^n B) \ne 0$ for all $n \ge m_0$ with 
$n \equiv j \hspace{1mm} ( \mod d)$, for at least one $j$ with $0\le j<d$.
\end{theoremA}

This is Theorem \ref{thm:main} from Section \ref{sec:main}. This section also contains notation and assumptions used throughout the remainder of the paper, as well as some key lemmas.  In Section \ref{sec:applications} we give several important applications of the main result, specifically to cohomology of pairs of finitely generated modules over the following kinds of rings: (1) commutative complete intersection rings, (2) certain finite-dimensional algebras, including quantum complete intersections and exterior algebras, and (3) modular group algebras.  

\subsection*{Acknowledgements}
Part of this work was completed at Centro Internazionale per la Ricerca Matematica in Trento, Italy; all three authors are grateful for their support. We would also like to thank the referee for valuable suggestions and comments that improved the presentation of the paper.

\section{The main result}\label{sec:main}

Throughout this section, we fix a triangulated category $( \C, \Sigma, \Delta )$. Thus $\C$ is an additive category equipped with an automorphism $\Sigma$, and $\Delta$ is a collection of triangles with respect to $\Sigma$---called distinguished triangles---satisfying Verdier's axioms \cite[Chapitre II, D{\'e}finition 1.1.1]{Verdier1996}. We shall often just refer to $\C$ instead of the more correct $( \C, \Sigma, \Delta )$. For objects $A,B\in\C$ we set 
$\Hom_\C^*(A,B)=\oplus_{n\in\mathbb Z}\Hom_\C(A,\Sigma^nB)$. Recall for any object $A\in\C$ the 
$\mathbb Z$-graded abelian group $\Hom_\C^*(A,A)$ is a graded ring under composition products.

A \emph{central ring action} on $\C$ is a non-negatively graded ring 
$H = \oplus_{n = 0}^{\infty} H^n$ which is graded-commutative, together with a ring homomorphism
\begin{center}
\begin{tikzpicture}
\diagram{d}{3em}{3em}{
 H & \Hom_\C^*(A,A) \\
 };
\path[->, font = \scriptsize, auto]
(d-1-1) edge node{$\varphi_A$} (d-1-2);
\end{tikzpicture}
\end{center}
of graded rings for every object $A\in\C$, such that for all $B\in\C$ the left and right actions of $H$ on 
$\Hom_{\C}^*(A, B)$ via $\varphi_A$ and $\varphi_B$ are compatible. Specifically, for 
$x \in H^m$ and $f \in \Hom_{\C}(A, \Sigma^n B)$ we have 
$\varphi_B(x) \cdot f = (-1)^{mn} f \cdot \varphi_A(x)$. We shall therefore just refer to 
$\Hom_{\C}^*(A,B)$ as a graded $H$-module.

We now prove a couple of general lemmas on graded rings and modules. The first one is known, but we include a proof; see also \cite[Section 2.1]{BerghIyengarKrauseOppermann2010} and \cite[Section 1.5]{BrunsHerzog1993}. For a non-negatively graded ring $S$, we shall denote the even part, that is, the subalgebra $\oplus_{n = 0}^{\infty} S^{2n}$, by $S^{\ev}$. Furthermore, we shall denote the positive part of $S$, that is, the ideal $\oplus_{n = 1}^{\infty} S^{n}$, by $S^+$. Finally, given a graded 
$S$-module $M$ and an integer $n_0$, we shall denote the submodule $\oplus_{n = n_0}^{\infty} M^n$ by $M^{\ge n_0}$. Thus, for example, the $S$-module $\oplus_{n = 0}^{\infty} M^n$ is denoted by 
$M^{\ge 0}$.

\begin{lemma}\label{lem:even}
If $S$ is a non-negatively graded ring which is graded-commutative, then the following are equivalent:

\emph{(1)} $S$ is Noetherian;

\emph{(2)} $S^{\ev}$ is Noetherian, and $S$ is finitely generated as a module over this subalgebra.
\end{lemma}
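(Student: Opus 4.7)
The plan is to prove the two directions separately. The implication (2) $\Rightarrow$ (1) is the easy one: if $S^*$ is a finitely generated module over the Noetherian ring $S^{\ev}$, then $S^*$ is a Noetherian $S^{\ev}$-module. Any left ideal of $S^*$ is in particular an $S^{\ev}$-submodule, hence finitely generated over $S^{\ev}$, and therefore finitely generated over $S^*$; so $S^*$ is Noetherian as a ring.

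For (1) $\Rightarrow$ (2), I would first note that the irrelevant ideal $S^*_+$ is finitely generated, and the standard induction-on-degree argument then shows that any homogeneous generating set of $S^*_+$ also generates $S^*$ as an algebra over $S^0$. Split such a set of homogeneous generators into even-degree elements $e_1, \dots, e_p$ and odd-degree elements $o_1, \dots, o_q$. The crucial point is that $o_i^2$ has even degree and therefore lies in $S^{\ev}$ (even though one only knows $2 o_i^2 = 0$, not $o_i^2 = 0$), and that every even-degree element of $S^*$ is central by graded-commutativity. So any monomial in the generators can be rearranged up to a sign into the form $\eta \cdot o_1^{b_1} \cdots o_q^{b_q}$ with $\eta \in S^{\ev}$; writing $b_i = 2c_i + \epsilon_i$ with $\epsilon_i \in \{0,1\}$, one extracts $(o_i^2)^{c_i} \in S^{\ev}$ from each odd factor and absorbs it into $\eta$, using centrality to pull it to the left. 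Hence $S^*$ is generated as an $S^{\ev}$-module by the $2^q$ squarefree monomials $o_1^{\epsilon_1} \cdots o_q^{\epsilon_q}$ with $\epsilon_i \in \{0,1\}$, and in particular is finitely generated over $S^{\ev}$.

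It remains to show that $S^{\ev}$ is Noetherian. The degree-zero part $S^0 = S^*/S^*_+$ is a quotient of $S^*$, hence Noetherian. The subalgebra $S^{\ev}$ is generated as an $S^0$-algebra by the even generators $e_1, \dots, e_p$ together with the pairwise products $o_i o_j$, all of which lie in $S^{\ev}$, so $S^{\ev}$ is a finitely generated commutative $S^0$-algebra. Hilbert's basis theorem then yields that $S^{\ev}$ is Noetherian. The main technical care required is with the graded-commutativity signs and with the subtlety that $o_i^2$ need not be zero; the decisive feature used is simply that $o_i^2$ has even degree, hence belongs to $S^{\ev}$ and is central in $S^*$.
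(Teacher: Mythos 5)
Your proof is correct. The direction (2) $\Rightarrow$ (1) matches the paper's (which dismisses it as obvious), and in the converse direction you have correctly isolated the delicate point: an odd generator $o_i$ need not square to zero in a graded-commutative ring, but $o_i^2$ lies in $S^{\ev}$ and is therefore central, which is all the rewriting needs. The paper reaches (2) by a different decomposition of the work: it never fixes algebra generators of all of $S^*$, but instead applies the Noetherian hypothesis to two specific homogeneous ideals. First, the ideal of $S^*$ generated by $S^{\ev}_+$ is finitely generated by homogeneous elements of $S^{\ev}_+$, and the Atiyah--Macdonald degree induction shows these generate $S^{\ev}$ as an $S^0$-algebra, whence $S^{\ev}$ is Noetherian; second, the ideal generated by the odd part $S^{\odd}$ is finitely generated by homogeneous odd elements, and a parity comparison of degrees shows these generate $S^{\odd}$ as an $S^{\ev}$-module, so $S^* = S^{\ev} \oplus S^{\odd}$ is module-finite over $S^{\ev}$. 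Your route instead generates the single ideal $S^*_+$, splits the generators by parity, and does the monomial rewriting; it costs a little combinatorics but buys explicit data -- the $2^q$ squarefree odd monomials as module generators and the elements $e_i$, $o_io_j$ as algebra generators of $S^{\ev}$. Both arguments ultimately rest on the same two facts: centrality of even-degree elements and the Hilbert basis theorem over the Noetherian ring $S^0 \simeq S^*/S^*_+$.
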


\begin{proof}
If (1) holds, consider the homogeneous ideal in $S$ generated by the positive part of 
$S^{\ev}$. As $S$ is Noetherian, this ideal is finitely generated, say by homogeneous elements 
$x_1, \dots, x_t \in S^{\ev}$ of positive degrees. A standard argument (cf.\ the proof of \cite[Proposition 10.7]{AtiyahMacdonald1969}) now shows that $S^{\ev}$ is finitely generated as an algebra over 
$S^0$ by $x_1, \dots, x_t$. Since $S^0 \simeq S / S^+$ is Noetherian, then so is $S^{\ev}$. Next, consider the odd part $S^{\odd} = \oplus_{n = 1}^{\infty} S^{2n-1}$ of $S$; this is a module over 
$S^{\ev}$. As above, the homogeneous ideal in $S$ generated by $S^{\odd}$ is finitely generated, say by homogeneous elements $y_1, \dots, y_s \in S^{\odd}$. These elements generate $S^{\odd}$ as an 
$S^{\ev}$-module, hence $S = S^{\ev} \oplus S^{\odd}$ is finitely generated as an $S^{\ev}$-module. Thus (2) holds, and the converse is obvious.
\end{proof}

The following lemma is a variant of \cite[Theorem 2.5]{Bergh2006} and \cite[Theorem 6.5]{BerghJorgensenThompson2023}. Given a homogeneous element $x$ in a graded ring, we denote by $\deg (x)$ its degree.

\begin{lemma}\label{lem:regular}
Let $S$ be a non-negatively graded Noetherian ring which is commutative in the ordinary sense, and $M$ a finitely generated graded $S$-module. Suppose that $S^0$ either contains an infinite field, or is local with an infinite residue field. Finally, choose homogeneous elements $x_1, \dots, x_t \in S$, of positive degrees, generating $S$ as an algebra over $S^0$, and set $d =  \lcm \{ \deg (x_1), \dots, \deg (x_t) \}$. Then there exists a homogeneous element $x \in S$, with $\deg (x) = d$, such that the multiplication map
\begin{center}
\begin{tikzpicture}
\diagram{d}{3em}{3em}{
 M^n & M^{n + d} \\
 };
\path[->, font = \scriptsize, auto]
(d-1-1) edge node{$\cdot x$} (d-1-2);
\end{tikzpicture}
\end{center}
is injective for $n \gg 0$.
\end{lemma}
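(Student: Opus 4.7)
The plan is a graded prime-avoidance argument. I would first split the (finite) set of associated primes of $M$ into those containing $S^*_+$ and those not. Let $\mathfrak{p}_1, \dots, \mathfrak{p}_r$ denote the \emph{relevant} associated primes (those not containing $S^*_+$). The associated primes containing $S^*_+$ are precisely the associated primes of the $S^*_+$-torsion submodule $T = \{m \in M : (S^*_+)^N m = 0 \text{ for some } N\}$. Since $T$ is finitely generated and killed by some $(S^*_+)^N$, it is a finitely generated module over $S^*/(S^*_+)^N$; because every monomial $x_1^{a_1}\cdots x_t^{a_t}$ with $\sum a_j \ge N$ lies in $(S^*_+)^N$, the ring $S^*/(S^*_+)^N$ vanishes in all sufficiently high degrees, so $T$ is concentrated in finitely many degrees. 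Fix $n_0$ strictly larger than all degrees in which $T$ is nonzero.

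Next I would carry out the prime avoidance in degree $d$. For each relevant $\mathfrak{p}_i$, primality and the fact that the $x_j$ generate $S^*$ over $S^0$ force some $x_{j(i)} \notin \mathfrak{p}_i$, whence $x_{j(i)}^{d/\deg(x_{j(i)})} \in S^d \setminus \mathfrak{p}_i$; thus $\mathfrak{p}_i \cap S^d$ is a proper $S^0$-submodule of the finitely generated $S^0$-module $S^d$ (finitely generated because spanned by the finitely many degree-$d$ monomials in the $x_j$). If $S^0$ contains an infinite field $k$, then each $\mathfrak{p}_i \cap S^d$ is a proper $k$-subspace of the $k$-vector space $S^d$, and since a vector space over an infinite field cannot be covered by finitely many proper subspaces, I choose $x \in S^d \setminus \bigcup_i \mathfrak{p}_i$. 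If instead $S^0$ is local with maximal ideal $\mathfrak{m}$ and infinite residue field $k$, then Nakayama's lemma applied to $S^d$ guarantees that the image $(\mathfrak{p}_i \cap S^d) + \mathfrak{m} S^d$ remains proper in $S^d / \mathfrak{m} S^d$ for each $i$; the same vector-space argument produces $\bar x \in S^d/\mathfrak{m} S^d$ outside every image, and any lift $x \in S^d$ works.

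To conclude, since $x$ avoids every relevant associated prime and the associated primes of $M/T$ are exactly the relevant primes, $x$ is a non-zerodivisor on $M/T$. If $m \in M^n$ satisfies $xm = 0$, then its image in $M/T$ is zero, so $m \in T$; but for $n \ge n_0$ we have $T^n = 0$, so $m = 0$. Hence multiplication by $x$ is injective on $M^n$ for all $n \ge n_0$. The main obstacle, I expect, is the prime-avoidance step in the local case, where finite generation of $S^d$ over $S^0$, Nakayama's lemma, and the infinite-residue-field hypothesis must be combined carefully so that the subspaces $\mathfrak{p}_i \cap S^d$ remain proper after reduction modulo $\mathfrak{m}$.
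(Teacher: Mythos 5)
Your proof is correct, and it reaches the conclusion by a genuinely more self-contained route than the paper's. The paper starts by invoking \cite[Lemma 2.5]{BerghIyengarKrauseOppermann2010} to produce an auxiliary homogeneous element $y$ of positive degree that is regular on $M^{\ge n_0}$ for some $n_0$, and then uses $y$ to rule out $S^d \subseteq \p_j$ for the associated primes $\p_j$ of $M^{\ge n_0}$: if $S^d \subseteq \p_j$ then every $x_i$, hence $y$, would lie in $\p_j$, contradicting regularity. You instead dispense with $y$ entirely by splitting off the $S^*_+$-torsion submodule $T=\Gamma_{S^*_+}(M)$, observing that $T$ is concentrated in finitely many degrees (finitely generated and killed by a power of $S^*_+$, with $S^*/(S^*_+)^N$ vanishing in high degrees), and that the associated primes of $M/T$ are exactly those not containing $S^*_+$; for such a prime the existence of some $x_{j(i)}\notin\p_i$ immediately gives $x_{j(i)}^{d/\deg(x_{j(i)})}\in S^d\setminus\p_i$. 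From that point on the two arguments coincide: the prime-avoidance step in degree $d$, using either an infinite field inside $S^0$ or Nakayama's lemma plus an infinite residue field, is the same as in the paper (and as in the proof of \cite[Theorem 2.5]{Bergh2006}). The two setups are in fact compatible -- once $n_0$ exceeds the top degree of $T$, the module $M^{\ge n_0}$ embeds in $M/T$, so the primes being avoided are essentially the same -- but your version avoids the external citation and makes the reason $S^d\not\subseteq\p_i$ transparent. The only caveats are trivial edge cases (e.g.\ $t=0$ or $M=T$, where the conclusion holds vacuously since $M^n=0$ for $n\gg 0$), which neither proof needs to address explicitly.
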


\begin{proof}
By \cite[Lemma 2.5]{BerghIyengarKrauseOppermann2010}, there exists a positive integer $n_0$ and a homogeneous element $y \in S$, of positive degree, such that the corresponding multiplication map $M^n \longrightarrow M^{n + \deg (y)}$ is injective for $n \ge n_0$. Let $\p_1, \dots, \p_t$ be the associated prime ideals of the (finitely generated) $S$-module $M^{\ge n_0}$. As in the proof of \cite[Theorem 2.5]{Bergh2006}, $S^d$ is not contained in any of these prime ideals; if it were, say with $S^d \subseteq \p_j$, then each generator $x_i$, and hence also the element $y$, would belong to $\p_j$. But this is impossible since $y$ is regular on $M^{\ge n_0}$. Therefore $S^d \cap \p_i$ is strictly contained in $S^d$ for all $i$.

If $S^0$ contains an infinite field $k$, then $S^d \cap \p_i$ is a proper $k$-vector subspace of $S^d$ for all $i$. A vector space over an infinite field is never a finite union of proper subspaces, so the union
$$\left ( S^d \cap \p_1 \right ) \cup \cdots \cup \left ( S^d \cap \p_t \right )$$
must also be strictly contained in $S^d$. The same conclusion follows if $S^0$ is local with an infinite residue field. To see this, denote by $\m$ the maximal ideal of $S^0$, and $k'$ the residue field $S^0 / \m$. If we denote $S^d \cap \p_i$ by $V_i$, then it follows from Nakayama's lemma that $(V_i + \m S^d) / \m S^d$ is a proper $k'$-subspace of $S^d / \m S^d$ for all $i$. As above, the union 
$$\left ( (V_1 + \m S^d) / \m S^d \right ) \cup \cdots \cup \left ( (V_t + \m S^d) / \m S^d \right )$$
is then strictly contained in $S^d / \m S^d$, so that the union of the $V_i$, that is, the union
$$\left ( S^d \cap \p_1 \right ) \cup \cdots \cup \left ( S^d \cap \p_t \right )$$
must be strictly contained in $S^d$.

It follows from what we have just proved that there exists an element $x \in S^d$ which is not contained in the union $\p_1 \cup \cdots \cup \p_t$. This union is the set of zero-divisors in $S$ of the module $M^{\ge n_0}$, and so $x$ must be regular on this module. Therefore the multiplication map on $M^{\ge n_0}$ induced by $x$ is injective.
\end{proof}

Before getting to the main theorem, we include the following remark.

\begin{remark}\label{rem:facts} 
Let $H$ be a graded-commutative and non-negatively graded ring acting centrally on $\C$. In the present passage we assume that for certain objects  $A, B\in\C$ the $H$-module
$\Hom_\C^*(A,B)$ is eventually Noetherian, that is, there exists $n_0\in\mathbb Z$ such that
$M=\Hom_\C^{\ge n_0}(A,B)$ is Noetherian. Denote by $\az$ the annihilator ideal of $M$ in $H$. This is a homogeneous ideal, and $M$ is finitely generated as a module over the graded ring $H/\az$. Moreover, this ring is Noetherian. To see this, let $m_1, \dots, m_t$ be homogeneous elements that generate $M$ over $H$, and consider the $H$-module homomorphism 
$H\longrightarrow M \oplus \cdots \oplus M$ (with $t$ copies of $M$ in the direct sum) given by 
$s \mapsto (sm_1, \dots, sm_t)$. The kernel is a homogeneous ideal of $H$, and must equal $\az$, since the ring is graded-commutative. We therefore obtain an injective $H$-homomorphism 
$H/\az \longrightarrow M \oplus \cdots \oplus M$, and so $H/\az$ is a Noetherian $H$-module. This implies that $H/\az$ is Noetherian as a ring. Let $E$ denote the even part of $H/\az$, that is,
\[
E=(H/\az)^{\ev}
\]
Then Lemma \ref{lem:even} says that $H/\az$ is module-finite over $E$ and it follows that $M$ is a finitely generated graded module over the commutative Noetherian graded ring $E$. Choose elements 
$x_1,\dots,x_t\in E$ of positive degrees generating $E$ as an algebra over $E^0$ and set 
\[
d=d(x_1,\dots,x_t)
\] 
to be the least common multiple of
the set $\{\deg(x_1),\dots,\deg(x_t)\}$.
\end{remark}

We now give the main theorem. 

\begin{theorem}\label{thm:main}
Let $( \C, \Sigma, \Delta )$ be a triangulated category with a central ring action from a graded-commutative non-negatively graded ring $H$, and $A,B \in \C$ two objects such that the $H$-module $\Hom_{\C}^{\ge n_0}(A, B)$ is Noetherian for some $n_0\in\mathbb Z$. Assume that one of the following holds:
\begin{enumerate}
\item[\emph{(1)}] $\Hom_{\C}(A, \Sigma^n B)$ has finite length over $H^0$ for $n \gg 0$;
\item[\emph{(2)}] $H^0$ contains an infinite field, or is local with an infinite residue field.
\end{enumerate}
Then either $\Hom_{\C}(A, \Sigma^n B) = 0$ for all $n\gg 0$, or there exist integers $d>0$ and $m_0$ 
such that $\Hom_{\C}(A, \Sigma^n B) \ne 0$ for all $n \ge m_0$ with 
$n \equiv j \hspace{1mm} ( \mod d)$, for at least one $j$ with $0\le j<d$.
\end{theorem}

\begin{proof}
Suppose first that (1) holds. Without loss of generality we can assume that $n_0$ satisfies 
$\Hom_{\C}^{\ge n_0}(A, B)$ is Noetherian and $\Hom_{\C}(A, \Sigma^n B)$ has finite length over 
$H^0$ for $n \ge n_0$. It follows that $\Hom_{\C}(A, \Sigma^n B)$ also has finite length over 
$(H/\az)^0$ for $n \ge n_0$, where $\az$ is the annihilator of $\Hom_{\C}^{\ge n_0}(A, B)$ in $H$. We may now replace the object $B$ by $\Sigma^{n_0}B$, since this does not change the conclusion, and assume that $n_0 = 0$. Thus we have that $\Hom_\C^{\ge 0}(A,B)$ is finitely generated over the Noetherian ring $H/\az$ and $\Hom_{\C}(A, \Sigma^n B)$ has finite length over $(H/\az)^0$ for 
$n \ge 0$. 

As in Remark \ref{rem:facts}, $\Hom_{\C}^{\ge 0}(A,B)$ is finitely generated as a module over the Noetherian ring $E$, where $E$ is the even part of $H/\az$. Note that $E^0=(H/\az)^0$. By the classical Hilbert-Serre theorem (cf.\ \cite[Theorem 11.1]{AtiyahMacdonald1969}), there is an equality of power series 
\[
\sum_{n=0}^{\infty} \ell_{E^0} \left ( \Hom_{\C}(A, \Sigma^n B) \right ) z^n = \frac{g(z)}{\prod_{i=1}^t \left (1-z^{\deg (x_i)} \right )}
\]
where $g(z)$ is a polynomial with integer coefficients, $\ell_{E^0}$ denotes the length over $E^0$ and the $x_i$ are elements of $E^{>0}$ defined in Remark \ref{rem:facts}. Thus 
$x_1,\dots,x_t$ are elements of $E$ of positive degrees generating $E$ over $E^0$ and 
$d=d(x_1,\dots,x_t)$ is the least common multiple of the degrees of the $x_i$. We may now assume that $t \ge 1$, that is, that the sequence $x_1, \dots, x_t$ is not empty; otherwise $E=E^0$, and 
$\Hom_{\C}(A, \Sigma^n B) = 0$ for $n \gg 0$ automatically. Write the right-hand side as
$$u(z) + \frac{v(z)}{\prod_{i=1}^t \left (1-z^{\deg (x_i)} \right )}$$
for polynomials $u(z), v(z)$ with integer coefficients, and where the degree of $v(z)$ is strictly less than $\sum_{i=1}^t \deg (x_i)$, that is, the degree of the denominator. By cancelling common factors in $v(z)$ and the denominator, we obtain an equality

\[
\sum_{n=0}^{\infty} \ell_{E^0} \left ( \Hom_{\C}(A, \Sigma^n B) \right ) z^n = u(z) +  \frac{p(z)}{q(z)}
\]
where the fraction on the right-hand side is reduced, and the degree of $p(z)$ is strictly less than that of $q(z)$. Moreover, every root $\alpha$ of $q(z)$ satisfies $\alpha^d =1$, since $\alpha^{\deg (x_i)} =1$ for some $i$.

It now follows from \cite[Proposition 4.4.1]{Stanley2012} that there exist polynomials 
$g_1(y), \dots, g_d(y)$ in $\mathbb{C}[y]$ with the following property: for every $n$ greater than the degree of $u(z)$, there is an equality
\[
\ell_{E^0} \left ( \Hom_{\C}(A, \Sigma^n B) \right ) = g_i(n)
\]
when $n \equiv i \hspace{1mm} ( \mod d)$. If all the $g_i$ are zero, then 
$\Hom_{\C}(A, \Sigma^n B) =0$ for all $n \gg 0$, and we are done. If not, let 
$\alpha_1, \dots, \alpha_s$ be the roots of the nonzero polynomials among $g_1(y), \dots, g_d(y)$, and take any integer $m_0$ greater than the maximum of the $| \alpha_i |$, and larger than the degree of $u(z)$. Moreover, choose an index $j$ with $g_j$ nonzero. Then for any $n \ge m_0$ with $n \equiv j \hspace{1mm} ( \mod d)$, we see that
\[
\ell_{E^0} \left ( \Hom_{\C}(A, \Sigma^n B) \right ) = g_j(n) \neq 0
\]
This proves the theorem when (1) holds.

Next, suppose that (2) holds. As before, we follow the notation from Remark \ref{rem:facts}, letting $E = (H / \az)^{\ev}$. If $\Hom_\C^{\ge n_0}(A,B)=0$ then we are done. Otherwise, the annihilator $\az$ of $\Hom_\C^{\ge n_0}(A,B)$ in $H$ is not the unit ideal. Therefore, if
$H^0$ contains an infinite field or is local with infinite residue field, then the same is true for
$E^0=(H/\az)^0$. According to Remark \ref{rem:facts} we may then use Lemma \ref{lem:regular} directly with 
$S$ replaced by $E$: there exists a homogeneous element $x \in E^d$ with the property that the multiplication map
\begin{center}
\begin{tikzpicture}
\diagram{d}{3em}{3em}{
 \Hom_{\C}(A, \Sigma^n B) & \Hom_{\C}(A, \Sigma^{n+d} B) \\
 };
\path[->, font = \scriptsize, auto]
(d-1-1) edge node{$\cdot x$} (d-1-2);
\end{tikzpicture}
\end{center}
is injective for all $n\gg 0$. Now if $\Hom_\C(A,\Sigma^n B)=0$ for all $n\gg 0$ we are done. If not, there exists an integer $m_0$ such that multiplication by $x$ is injective for all 
$n\ge m_0$ and $\Hom_\C(A,\Sigma^{m_0}B)\ne 0$. By injectivity of $x$ again it follows that 
$\Hom_\C^n(A,\Sigma^nB)\ne 0$ for all $n\ge m_0$ with $n \equiv m_0 \hspace{1mm} ( \mod d)$. This proves the theorem, with $j$ being the residue of $m_0$ modulo $d$.
\end{proof}

\begin{remark}\label{rem:main}
(1) A potentially more useful equivalent formulation of the conclusion of Theorem 
\ref{thm:main} is the following: there exists an integer $d>0$ such that 
$\Hom_{\C}(A, \Sigma^n B) = 0$ 
for all $n\gg 0$  if and only if for each $j$ with $0\le j<d$ we have $\Hom_\C^n(A,\Sigma^nB)=0$ for infinitely many $n>0$ with $n \equiv j \hspace{1mm} ( \mod d)$.

(2) If $A,B$ and $H$ are as in the theorem, let $\az$ be the annihilator ideal of $\Hom_{\C}^{\ge n_0}(A, B)$ in $H$. From Remark \ref{rem:facts}, we see that the integer $d$ can be chosen to be the least common multiple of the degrees of the elements in a generating set of the Noetherian ring $(H/\az)^{\ev}$ over its degree zero part $(H/\az)^0$.
\end{remark}

\section{Applications}\label{sec:applications}

In this section, we apply the main result of Section \ref{sec:main} to the cohomology of modules over certain rings. We start with commutative local complete intersections. Recall that such a ring $A$ is a commutative Noetherian local ring, whose completion (with respect to the maximal ideal) is of the form $S / (x_1, \dots, x_c)$ for some regular local ring $S$, and a regular sequence $x_1, \dots, x_c$ contained in the maximal ideal of $S$. When this is the case, then there always exists such a regular sequence contained in the square of the maximal ideal of $S$, and the length $c$ is then unique. This is the \emph{codimension} of the local complete intersection $A$. Note that regular local rings are precisely the local complete intersections of codimension zero.

The following is a generalization of \cite[Lemma 3.2]{Jorgensen1999}. We denote the Krull dimension of $A$ by $\dim A$.

\begin{theorem}\label{thm:ci}
Let $A$ be a commutative local complete intersection ring, and $M,N$ two finitely generated $A$-modules. Then either $\Ext_A^{n}(M,N) = 0$ for all $n > \dim A$, or there exists a non-negative integer $m_0$ with $\Ext_A^{n}(M,N) \neq 0$ for all even or all odd integers $n \ge m_0$.
\end{theorem}

\begin{proof}
Denote the codimension of $A$ by $c$. By \cite[Lemma 2.2]{AvramovBuchweitz2000}, there exists a (faithfully) flat extension $A \longrightarrow B$ of local rings, where $B$ is a codimension $c$ complete intersection with the following properties: the residue field of $B$ is infinite, and $B = S / (x_1, \dots, x_c)$ for some regular local ring $S$, and a regular sequence $x_1, \dots, x_c$ contained in the maximal ideal of $S$. The Krull dimension of $B$ is the same as that for $A$, and it follows from \cite[Proposition 2.5.8]{Grothendieck1965} that there is an isomorphism
$$\Ext_B^n( B \otimes_A M, B \otimes_A N) \simeq B \otimes_A \Ext_A^{n}(M,N)$$
for every $n$. We may therefore assume that $A$ itself is of the form $S / (x_1, \dots, x_c)$, and with an infinite residue field.

Consider the bounded derived category $\Der^b(A)$ of $A$. As explained for example in \cite[Section 7.1]{AvramovIyengar2007} or \cite[Section 11]{BensonIyengarKrause2008}, there exists a polynomial ring $A[ \chi_1, \dots, \chi_c ]$ of cohomology operators of degree two---called Eisenbud operators---acting centrally on $\Der^b(A)$, in such a way that $\Hom_{\Der^b(A)}^{\ge 0}(X,Y)$ is finitely generated for all $X,Y \in \Der^b(A)$. Moreover, as $A$ has an infinite residue field, we see that condition (2) in the statement of Theorem \ref{thm:main} is satisfied. Now use the theorem, Remark \ref{rem:main}(2), and the fact that there is an isomorphism $\Ext_A^{n}(M,N) \simeq \Hom_{\Der^b(A)}(M, \Sigma^nN)$ for every $n \ge 1$, when we view $M$ and $N$ as stalk complexes in the derived category: either $\Ext_A^{n}(M,N) = 0$ for $n \gg 0$, or there exists a non-negative integer $m_0$ with $\Ext_A^{n}(M,N) \neq 0$ for all even or all odd integers $n \ge m_0$. Finally, by \cite[Theorem III]{AvramovBuchweitz2000}, if $\Ext_A^{n}(M,N) = 0$ for all higher $n$, then $\Ext_A^{n}(M,N) = 0$ for all $n > \dim A$.
\end{proof}

There is also a homology version of the theorem, for $\Tor$; cf.\ \cite[Theorem 3.1]{Jorgensen1999}. One must then assume that the tensor product $M \ot_A N$ has finite length, in order to (in the proof) use Theorem \ref{thm:ci} via an isomorphism linking $\Ext$ and $\Tor$, involving the injective hull of the residue field. Specifically, we have ismomorphisms for all $i$
\[
\Ext^i_A(M,\Hom_A(N,E_A)) \simeq \Hom_A(\Tor_i^A(M,N),E_A)
\]
where $E_A$ is the injective hull of the residue field of $A$.

Let us now turn to finite-dimensional algebras. Given a field $k$ and a finite-dimensional $k$-algebra $A$, consider its Hochschild cohomology ring $\HH^*(A)$. The degree $n$ component of this ring is $\Ext_{A \otimes_k A^{\op}}^n(A,A)$, where $A^{\op}$ is the opposite algebra of $A$. In other words, the elements of the degree $n$ component $\HH^n(A)$ correspond to $n$-fold bimodule extensions of $A$ with itself. The multiplication is the Yoneda product, and it is well known that $\HH^*(A)$ is graded-commutative; cf.\ \cite{Gerstenhaber1963}. We now refer to \cite{SnashallSolberg2004} and \cite{Solberg2006} for details concerning what follows. Given a finitely generated left $A$-module $M$, the tensor product $- \ot_A M$ induces a homomorphism 
\begin{center}
\begin{tikzpicture}
\diagram{d}{3em}{3em}{
\HH^*(A) & \Ext_A^*(M,M) \\
 };
\path[->, font = \scriptsize, auto]
(d-1-1) edge node{$\varphi_M$} (d-1-2);
\end{tikzpicture}
\end{center}
of graded rings. Thus if $N$ is another finitely generated left $A$-module, then $\Ext_A^*(M,N)$ becomes a left $\HH^*(A)$-module via $\varphi_N$, and a right $\HH^*(A)$-module via $\varphi_M$. However, the two module structures coincide up to a sign, and so it does not matter whether we view $\Ext_A^*(M,N)$ as a left module or as a right module over $\HH^*(A)$. 

We can now state and prove the following asymptotic vanishing result for finite-dimensional algebras.

\begin{theorem}\label{thm:findim}
Let $A$ be a finite-dimensional algebra over a field, and $M,N$ two finitely generated left $A$-modules with the property that the $\HH^*(A)$-module $\Ext_A^{\ge n_0}(M,N)$ is Noetherian for some integer $n_0$. Then either $\Ext_A^{n}(M,N) = 0$ for all $n\gg 0$, or there exist integers $d>0$ and $m_0$ 
such that $\Ext_A^{n}(M,N) \ne 0$ for all $n \ge m_0$ with $n \equiv j \hspace{1mm} ( \mod d)$, for at least one $j$ with $0\le j<d$.
\end{theorem}

\begin{proof}
As explained in \cite[Section 10]{Solberg2006}, the Hochschild cohomology ring $\HH^*(A)$ acts centrally on the bounded derived category $\Der^b(A)$ of $A$. When we view $M$ and $N$ as stalk complexes in $\Der^b(A)$, then the action of $\HH^*(A)$ on $\Hom_{\Der^b(A)}^{\ge 0}(M,N)$ corresponds to the above action on $\Ext_A^*(M,N)$, via the two ring homomorphisms $\varphi_M$ and $\varphi_N$. Here, as in the proof of Theorem \ref{thm:ci}, we use the fact that there is an isomorphism $\Ext_A^{n}(M,N) \simeq \Hom_{\Der^b(A)}(M, \Sigma^nN)$ for every $n \ge 1$. As pointed out in \cite[Proposition 9.1]{BKSS2020}, the isomorphism $\Ext_A^{\ge n_0}(M,N) \simeq \Hom_{\Der^b(A)}^{\ge n_0}(M, N)$ is one of $\HH^*(A)$-modules, hence the $\HH^*(A)$-module $\Hom_{\Der^b(A)}^{\ge n_0}(M, N)$ is Noetherian.

Since $\HH^0(A)$ is just the center of $A$, and contains the ground field, we see that $\Hom_{\Der^b(A)}(M, \Sigma^n N)$ has finite length over $\HH^0(A)$ for all $n$. This means that condition (1) in the statement of Theorem \ref{thm:main} holds. Applying the theorem to $\Hom_{\Der^b(A)}^{\ge n_0}(M,N)$, we obtain what we want.
\end{proof}

For certain algebras one can give a more precise vanishing conclusion in the statement of the theorem, and also a more general version of the theorem itself. We explain all this in the following remark.

\begin{remark}\label{rem:findim}
(1) We say that a finite-dimensional algebra $A$ has \emph{finitely generated cohomology} if $\HH^*(A)$ is Noetherian, and $\Ext_A^*(M,N)$ is a finitely generated $\HH^*(A)$-module for all finitely generated left $A$-modules $M$ and $N$. Since all such $A$-modules have finite length, this is equivalent to the following: $\HH^*(A)$ is Noetherian, and $\Ext_A^*(A / \mathfrak{r}, A / \mathfrak{r} )$ is a finitely generated $\HH^*(A)$-module, where $\mathfrak{r}$ denotes the Jacobson radical of $A$.

If $A$ has finitely generated cohomology, then it is actually a Gorenstein algebra, by \cite[Proposition 2.2]{EHSST2004}. The injective dimension $\id_A A$ of $A$ as a left module over itself is then finite, say $\id_A A = s$. Given any finitely generated left $A$-module $M$, the syzygy module $M' = \Omega_A^s(M)$ is maximal Cohen-Macaulay, in the sense that $\Ext_A^n(M',A)=0$ for $n \ge 1$. If $\Ext_A^{n}(M,N) = 0$ for all higher $n$, then the same is true for $\Ext_A^n(M',N)$, and by combining this with \cite[Theorem 3.7]{Bergh2011} we see that the vanishing conclusion in the statement of Theorem \ref{thm:findim} can be sharpened considerably to the following: $\Ext_A^{n}(M,N) = 0$ for all $n > 2 \id_A A$.

(2) There are situations where it is more convenient to work with a suitably nice subalgebra of $\HH^*(A)$, and Theorem \ref{thm:findim} can be stated in this generality. Namely, let $H$ be a graded subalgebra of $\HH^*(A)$. Given a finitely generated left $A$-module $M$, the homomorphism $\varphi_M$ restricts to a homomorphism 
\begin{center}
\begin{tikzpicture}
\diagram{d}{3em}{3em}{
H & \Ext_A^*(M,M) \\
 };
\path[->, font = \scriptsize, auto]
(d-1-1) edge (d-1-2);
\end{tikzpicture}
\end{center}
of graded rings, and $H$ acts centrally on $\Der^b(A)$. The proof of Theorem \ref{thm:findim} now applies, with $\HH^{*}(A)$ replaced by $H$, and so we may replace $\HH^{*}(A)$ by $H$ in the statement of the theorem.

If $H$ is Noetherian and $\Ext_A^*(M,N)$ is a finitely generated $H$-module for all finitely generated left $A$-modules $M$ and $N$, then $\Hom_{\Der^b(A)}^{\ge 0}(X,Y)$ is a finitely generated $H$-module for all $X, Y \in \Der^b(A)$. Note that, by \cite[Proposition 5.7]{Solberg2006}, if there exists such a subalgebra $H$ of $\HH^*(A)$, then $A$ has finitely generated cohomology in the sense that we defined in the above remark, with respect to $\HH^*(A)$. In \emph{loc.\ cit.}, it is implicitly assumed that $H^0 = \HH^0(A)$, but this is not needed.
\end{remark}

Let us look at a large class of algebras for which the remark applies. Fix a field $k$, a nonzero element $q \in k$, and two positive integers $c \ge 1$ and $a \ge 2$. The corresponding \emph{quantum complete intersection} $A_{c,q}^a$ is the finite-dimensional $k$-algebra
$$A_{c,q}^a = k \langle x_1, \dots, x_c \rangle / (x_i^a, \{ x_ix_j - q x_jx_i \}_{i<j} )$$
It is commutative---and a local complete intersection---if $q=1$, but otherwise noncommutative when $c \ge 2$. We allow the case $c = 1$ here; in this case, the algebra is the commutative truncated polynomial ring $k[x]/(x^a)$, and the commutator element $q$ plays no role. Note that $A_{c,q}^a$ is always selfinjective. The following is a non-commutative version of Theorem \ref{thm:ci}.

\begin{theorem}\label{thm:qci}
Let $k$ be a field, and $c,a$ two positive integers with $c \ge 1$ and $a \ge 2$. Furthermore, let $q \in k$ be an element with $q^a =1$, and consider the corresponding quantum complete intersection $A = A_{c,q}^a$. Then, given two finitely generated left $A$-modules $M$ and $N$, either $\Ext_A^{n}(M,N) = 0$ for all $n > 0$, or there exists a non-negative integer $m_0$ with $\Ext_A^{n}(M,N) \neq 0$ for all even or all odd integers $n \ge m_0$.
\end{theorem}

\begin{proof}
If either $q = 1$ or $c=1$, then the algebra is isomorphic to the commutative local complete intersection ring $k \llbracket x_1, \dots, x_c \rrbracket / (x_1^a, \dots, x_c^a)$, where $k \llbracket x_1, \dots, x_c \rrbracket$ denotes the power series ring. In this case, Theorem \ref{thm:ci} applies, so from now on we assume that $q \neq 1$ and $c \ge 2$.

The algebra $A$ is local, with $k$ as the simple left module. Moreover, by \cite[Theorem 5.5]{BerghOppermann2008}, it has finitely generated cohomology, so Theorem \ref{thm:findim} and Remark \ref{rem:findim} apply. However, we shall show that there is a polynomial subalgebra of $\HH^*(A)$, with generators in degree two, that does the job for us, as in Remark \ref{rem:findim}(2).

First of all, it follows from \cite[Theorem 5.3]{BerghOppermann2008} that there exists a polynomial subalgebra $S = k[z_1, \dots, z_c]$ of the center of $\Ext_A^*(k,k)$, with the latter a finitely generated $S$-module. Now consider the natural $\mathbb{Z}^c$-grading on $A$ given by $\deg (x_i) = e_i$, where $e_i$ denotes the unit vector in $\mathbb{Z}^c$ with $1$ in the $i$th coordinate. This grading is passed down to $\Ext_A^*(k,k)$, so that each $\Ext_A^n(k,k)$ is $\mathbb{Z}^c$-graded as well. As explained in \cite[Section 3]{Oppermann2010}, the element $z_i$ belongs to $\Ext_A^{2, -ae_i}(k,k)$; in other words, it belongs to $\Ext_A^2(k,k)$, and has internal degree $-ae_i$.

\sloppy We now follow the notation from \cite[Section 2]{Oppermann2010}, in particular from \cite[Technical notation, page 825]{Oppermann2010}. Consider the vector $\mathbf{d} = -ae_i$. Then $Q \mathbf{d} = (1, \dots, 1)$, so that $( Q \mathbf{d} )_i =1$ for $1 \le i \le c$. Moreover, since $a \ge 2$, the three subsets $I_{\rm{max}}, I_1$ and $I_2$ of $\{ 1, \dots, c \}$, defined right before \cite[Theorem 3.4]{Oppermann2010} and relative to our vector $\mathbf{d}$, are just given by
$$I_{\rm{max}} = I_1 = \emptyset, \hspace{5mm} I_2 = \{ 1, \dots, c \}$$
It therefore follows from \cite[Theorem 3.5]{Oppermann2010} that the polynomial subalgebra $S$ of $\Ext_A^*(k,k)$ belongs to the image of the homomorphism
\begin{center}
\begin{tikzpicture}
\diagram{d}{3em}{3em}{
\HH^*(A) & \Ext_A^*(k,k) \\
 };
\path[->, font = \scriptsize, auto]
(d-1-1) edge node{$\varphi_k$} (d-1-2);
\end{tikzpicture}
\end{center}
Consequently, there exists a polynomial subalgebra $H = k[ \eta_1, \dots, \eta_c ]$ of $\HH^*(A)$, with $\deg ( \eta_i ) =2$ for all $i$, and with the property that $\Ext_A^*(k,k)$ is finitely generated when viewed as an $H$-module via $\varphi_k$. 

As explained in Remark \ref{rem:findim}(2), we may now apply Theorem \ref{thm:findim} with $\HH^*(A)$ replaced by $H$ in the statement, and with $n_0 =0$ since $A$ has finitely generated cohomology. Moreover, since our algebra is selfinjective, we see from Remark \ref{rem:findim}(1) that the vanishing conclusion actually becomes $\Ext_A^{n}(M,N) = 0$ for all $n > 0$. Finally, since the degree of each $\eta_i$ is two, it follows from Remark \ref{rem:main}(2) that we may choose $d=2$ in Theorem \ref{thm:findim}.
\end{proof}

Exterior algebras are particular examples of quantum complete intersections for which Theorem \ref{thm:qci} applies. We recored this in the following corollary.

\begin{corollary}\label{cor:exterior}
Let $k$ be a field, $c \ge 1$ an integer, and $A$ the corresponding exterior algebra
$$k \langle x_1, \dots, x_c \rangle / (x_i^2, \{ x_ix_j + x_jx_i \}_{i \neq j} )$$
Then, given two finitely generated left $A$-modules $M$ and $N$, either $\Ext_A^{n}(M,N) = 0$ for all $n > 0$, or there exists a non-negative integer $m_0$ with $\Ext_A^{n}(M,N) \neq 0$ for all even or all odd integers $n \ge m_0$.
\end{corollary} 

Our final application of the main result from Section \ref{sec:main} is the cohomology of modules over group algebras of finite groups. Let therefore $k$ be a field and $G$ a finite group. If the characteristic of $k$ does not divide the order of $G$, then the group algebra $kG$ is semisimple by Maschke's theorem. Therefore, from a homological point of view, the only interesting setting for us is when $k$ is of prime characteristic $p$, with $p$ dividing $|G|$. 

Recall that the cohomology ring $\Coh^*(G,k)$ of $G$ is the $\Ext$-algebra of the trivial $kG$-module, that is, $\Coh^*(G,k) = \Ext_{kG}^*(k,k)$. Given left $kG$-modules $M$ and $N$, the (Hopf algebra) tensor product $- \ot_k M$ induces a homomorphism
\begin{center}
\begin{tikzpicture}
\diagram{d}{3em}{3em}{
\Coh^*(G,k) & \Ext_{kG}^*(M,M) \\
 };
\path[->, font = \scriptsize, auto]
(d-1-1) edge (d-1-2);
\end{tikzpicture}
\end{center}
of graded rings, turning $\Ext_{kG}^*(M,N)$ into a left and a right $\Coh^*(G,k)$-module. As with the Hochschild cohomology ring, the group cohomology ring is graded-commutative, and the left and right module actions on $\Ext_{kG}^*(M,N)$ coincide up to a sign; cf.\ \cite[Section 3.2]{Benson1998}. By a classical result of Evens and Venkov (cf.\ \cite{Evens1961}, \cite{Venkov1959} and \cite{Venkov1961}), the cohomology ring is Noetherian, and $\Ext_{kG}^*(M,N)$ is finitely generated when $M$ and $N$ are finitely generated as $kG$-modules. 

\begin{theorem}\label{thm:grp}
Let $k$ be a field of prime characteristic $p$, and $G$ a finite group with $p$ dividing $|G|$. Furthermore, let $M$ and $N$ be two finitely generated left $kG$-modules. Then either $\Ext_{kG}^{n}(M,N) = 0$ for all $n > 0$, or there exist integers $d>0$ and $m_0$ 
such that $\Ext_{kG}^{n}(M,N) \ne 0$ for all $n \ge m_0$ with $n \equiv j \hspace{1mm} ( \mod d)$, for at least one $j$ with $0\le j<d$.
\end{theorem}

\begin{proof}
We denote the stable category of finitely generated left $kG$-modules by $\underline{\mod} kG$, and the group of morphisms between two objects $M,N$ by $\stHom_{kG}(M,N)$. Furthermore, we identify $\oplus_{n \ge 1} \stHom_{kG}(M, \Omega_{kG}^{-n}(N))$ with $\Ext_{kG}^{\ge 1}(M,N)$. The cohomology ring $\Coh^*(G,k)$ acts centrally on $\underline{\mod} kG$, as explained in the paragraph preceding this theorem. Moreover, the result of Evens and Venkov implies that $\oplus_{n \ge 0} \stHom_{kG}(M, \Omega_{kG}^{-n}(N))$ is a finitely generated $\Coh^*(G,k)$-module; in degree zero we have $\Coh^0(G,k) = k$, and $\stHom_{kG}(M,N)$ is finite-dimensional. 

Since condition (1) in the statement of Theorem \ref{thm:main} holds, we can now apply the theorem with $n_0 =0$: either $\Ext_{kG}^{n}(M,N) = 0$ for all higher $n$, or there exist integers $d>0$ and $m_0$ 
such that $\Ext_{kG}^{n}(M,N) \ne 0$ for all $n \ge m_0$ with $n \equiv j \hspace{1mm} ( \mod d)$, for at least one $j$ with $0\le j<d$. Finally, since the group algebra $kG$ is selfinjective, the arguments from Remark \ref{rem:findim}(1) can be applied so that the vanishing conclusion becomes $\Ext_{kG}^{n}(M,N) = 0$ for all $n > 0$.
\end{proof}

\begin{remark}\label{rem:grp}
Choose homogeneous elements $x_1, \dots, x_t \in \Coh^*(G,k)$, of positive degrees, generating the commutative even subalgebra $\Coh^{\ev}(G,k)$ of $\Coh^*(G,k)$ over $\Coh^0(G,k)$. It follows from Remark \ref{rem:main}(2) that we may take the $d$ in Theorem \ref{thm:grp} to be the least common multiple of the degrees of these generators. If the characteristic of the field $k$ is two, so that the whole cohomology ring $\Coh^*(G,k)$ is commutative, we may take generators for $\Coh^*(G,k)$ itself.
\end{remark}

Let us look at an example with some of the symmetric groups.

\begin{example}
Let $S_n$ denote the $n$th symmetric group and $k$ the field with two elements. Furthermore, let $M$ and $N$ be two finitely generated left $kS_n$-modules. Note that since we are in characteristic two, the group cohomology ring $\Coh^{*}(S_n,k)$ is commutative.

By \cite[Theorem 4.1]{Nakaoka1962}, the cohomology ring $\Coh^*(S_4,k)$ is generated over $\Coh^0(S_4,k)$ by elements $x,y,z$ in degrees $1, 2$ and $3$, respectively. The least common multiple of these degrees is $6$, and so from Theorem \ref{thm:grp} and Remark \ref{rem:grp} we obtain the following: if $\Ext_{kS_4}^{m}(M,N) \neq 0$ for some $m > 0$, then there exists a number $t \in \{ 0,1,2,3,4,5 \}$ such that $\Ext_{kS_4}^{m}(M,N) \neq 0$ for all $m \gg 0$ with $m \equiv t \hspace{1mm} ( \mod 6 )$.

In \cite{AdemMaginnisMilgram1990}, the cohomology ring $\Coh^*(S_n,k)$ was completely described for $n = 6,8,10,12$. It starts rather gently: as with $S_4$, the cohomology ring $\Coh^*(S_6,k)$ is generated over $\Coh^0(S_6,k)$ by elements of degrees $1,2$ and $3$. Here we obtain the same vanishing result: if $\Ext_{kS_6}^{m}(M,N) \neq 0$ for some $m > 0$, then there exists a number $t \in \{ 0,1,2,3,4,5 \}$ such that $\Ext_{kS_6}^{m}(M,N) \neq 0$ for all $m \gg 0$ with $m \equiv t \hspace{1mm} ( \mod 6 )$. However, already for $S_8$ it gets much more complicated. The cohomology ring $\Coh^*(S_8,k)$ is generated over $\Coh^0(S_8,k)$ by elements of degrees $1,2,3,4,5,6$ and $7$, and the least common multiple of these degrees is $420$. Therefore, we obtain the following: if $\Ext_{kS_8}^{m}(M,N) \neq 0$ for some $m > 0$, then there exists a number $t \in \{ 0,1, \dots, 419 \}$ such that $\Ext_{kS_8}^{m}(M,N) \neq 0$ for all $m \gg 0$ with $m \equiv t \hspace{1mm} ( \mod 420 )$.
\end{example}


\begin{thebibliography}{999}

\bibitem{AdemMaginnisMilgram1990}A.\ Adem, J.\ Maginnis, R.J.\ Milgram, \emph{Symmetric invariants and cohomology of groups}, Math.\ Ann.\ 287 (1990), no. 3, 391--411.

\bibitem{AtiyahMacdonald1969}M.F.\ Atiyah, I.G.\ Macdonald, \emph{Introduction to commutative algebra}, Addison-Wesley Publishing Co., Reading, Mass.\ - London - Don Mills, Ont., 1969, ix+128 pp.

\bibitem{Auslander1961}M.\ Auslander, \emph{Modules over unramified regular local rings}, Illinois J.\ Math.\ 5 (1961), 631--647.

\bibitem{AvramovBuchweitz2000}L.L.\ Avramov, R.-O.\ Buchweitz, \emph{Support varieties and cohomology over complete intersections}, Invent.\ Math.\ 142 (2000), no.\ 2, 285--318.

\bibitem{AvramovIyengar2007}L.L.\ Avramov, S.B.\ Iyengar, \emph{Constructing modules with prescribed cohomological support}, Illinois J.\ Math.\ 51 (2007), no.\ 1, 1--20.

\bibitem{AvramovIyengar2008}L.L.\ Avramov, S.B.\ Iyengar, \emph{Cohomologically noetherian rings}, in preparation since 2007.

\bibitem{Benson1998}D.J.\ Benson, \emph{Representations and cohomology. I}, Cambridge Stud.\ Adv.\ Math., 30, Cambridge University Press, Cambridge, 1998, xii+246 pp.

\bibitem{BensonIyengarKrause2008}D.J.\ Benson, S.B.\ Iyengar, H.\ Krause, \emph{Local cohomology and support for triangulated categories}, Ann.\ Sci.\ {\'E}c. Norm.\ Sup{\'e}r.\ (4) 41 (2008), no.\ 4, 573--619.

\bibitem{Bergh2006}P.A.\ Bergh, \emph{Complexity and periodicity}, Colloq.\ Math.\ 104 (2006), no.\ 2, 169--191.

\bibitem{Bergh2008}P.A.\ Bergh, \emph{On the vanishing of (co)homology over local rings}, J.\ Pure Appl.\ Algebra 212 (2008), no.\ 1, 262--270.

\bibitem{Bergh2011}P.A.\ Bergh, \emph{On the vanishing of cohomology in triangulated categories}, Homology Homotopy Appl.\ 13 (2011), no.\ 2, 19--36.

\bibitem{Bergh2013}P.A.\ Bergh, \emph{On the vanishing of homology with modules of finite length}, Math.\ Scand.\ 112 (2013), no.\ 1, 11--18.

\bibitem{BerghIyengarKrauseOppermann2010}P.A.\ Bergh, S.B.\ Iyengar, H.\ Krause, S.\ Oppermann, \emph{Dimensions of triangulated categories via Koszul objects}, Math.\ Z.\ 265 (2010), no.\ 4, 849--864.

\bibitem{BerghJorgensen2011}P.A.\ Bergh, D.A.\ Jorgensen, \emph{On the vanishing of homology for modules of finite complete intersection dimension}, J.\ Pure Appl.\ Algebra 215 (2011), no.\ 3, 242--252.

\bibitem{BerghJorgensenThompson2023}P.A.\ Bergh, D.A.\ Jorgensen, P.\ Thompson, \emph{Dimension and depth inequalities over complete intersections}, Ark.\ Mat.\ 63 (2025), no.\ 1, 31--60.

\bibitem{BerghOppermann2008}P.A.\ Bergh, S.\ Oppermann, \emph{Cohomology of twisted tensor products}, J.\ Algebra 320 (2008), no.\ 8, 3327--3338.

\bibitem{BrunsHerzog1993}W.\ Bruns, J.\ Herzog, \emph{Cohen-Macaulay rings}, Cambridge Stud.\ Adv.\ Math., 39, Cambridge University Press, Cambridge, 1993, xii+403 pp. 

\bibitem{BKSS2020}A.B.\ Buan, H.\ Krause, N.\ Snashall, {\O}.\ Solberg, \emph{Support varieties -- an axiomatic approach}, Math.\ Z.\ 295 (2020), no.\ 1--2, 395--426.

\bibitem{Celikbas2011}O.\ Celikbas, \emph{Vanishing of $\Tor$ over complete intersections}, J.\ Commut.\ Algebra 3 (2011), no.\ 2, 169--206.

\bibitem{CelikbasDao2011}O.\ Celikbas, H.\ Dao, \emph{Asymptotic behavior of $\Ext$ functors for modules of finite complete intersection dimension}, Math.\ Z.\ 269 (2011), no.\ 3--4, 1005--1020.

\bibitem{CelikbasIyengarPiepmeyerWiegand2015}O.\ Celikbas, S.B.\ Iyengar, G.\ Piepmeyer, R.\ Wiegand, \emph{Criteria for vanishing of $\Tor$ over complete intersections}, Pacific J.\ Math.\ 276 (2015), no.\ 1, 93--115.

\bibitem{Dao2013}H.\ Dao, \emph{Some homological properties of modules over a complete intersection, with applications}, in \emph{Commutative algebra. Expository papers dedicated to David Eisenbud on the occasion of his 65th birthday}, edited by Irena Peeva, Springer, New York, 2013, viii+707 pp.

\bibitem{EHSST2004}K.\ Erdmann, M.\ Holloway, N.\ Snashall, {\O}.\ Solberg, R.\ Taillefer, \emph{Support varieties for selfinjective algebras}, K-Theory 33 (2004), no.\ 1, 67--87.

\bibitem{Evens1961}L.\ Evens, \emph{The cohomology ring of a finite group}, Trans.\ Amer.\ Math.\ Soc.\ 101 (1961), 224--239.

\bibitem{Gerstenhaber1963}M.\ Gerstenhaber, \emph{The cohomology structure of an associative ring}, Ann.\ of Math.\ (2) 78 (1963), 267--288.

\bibitem{Grothendieck1965}A.\ Grothendieck, \emph{{\'E}l{\'e}ments de g{\'e}om{\'e}trie alg{\'e}brique. IV. {\'E}tude locale des sch{\'e}mas et des morphismes de sch{\'e}mas. II}, Inst.\ Hautes {\'E}tudes Sci.\ Publ.\ Math.\ (1965), no.\ 24, 231 pp.

\bibitem{HunekeWiegand1994}C.\ Huneke, R.\ Wiegand, \emph{Tensor products of modules and the rigidity of Tor}, Math.\ Ann. 299 (1994), no.\ 3, 449--476.

\bibitem{HunekeJorgensenWiegand2001}C.\ Huneke, D.A.\ Jorgensen, R.\ Wiegand, \emph{Vanishing theorems for complete intersections}, J.\ Algebra 238 (2001), no.\ 2, 684--702.

\bibitem{Jorgensen1999}D.A.\ Jorgensen, \emph{Complexity and $\Tor$ on a complete intersection}, J.\ Algebra 211 (1999), no.\ 2, 578--598.

\bibitem{Jorgensen2001}D.A.\ Jorgensen, \emph{Vanishing of (co)homology over commutative rings}, Comm.\ Algebra 29 (2001), no.\ 5, 1883--1898.

\bibitem{Lichtenbaum1966}S.\ Lichtenbaum, \emph{On the vanishing of $\Tor$ in regular local rings}, Illinois J.\ Math.\ 10 (1966), 220--226.

\bibitem{Murthy1963}M.P.\ Murthy, \emph{Modules over regular local rings}, Illinois J.\ Math.\ 7 (1963), 558--565.

\bibitem{Nakaoka1962}M.\ Nakaoka, \emph{Note on cohomology algebras of symmetric groups}, J.\ Math.\ Osaka City Univ.\ 13 (1962), 45--55.

\bibitem{Oppermann2010}S.\ Oppermann, \emph{Hochschild cohomology and homology of quantum complete intersections}, Algebra Number Theory 4 (2010), no.\ 7, 821--838.

\bibitem{SnashallSolberg2004}N.\ Snashall, {\O}.\ Solberg, \emph{Support varieties and Hochschild cohomology rings}, Proc.\ London Math.\
Soc.\ 88 (2004), no.\ 3, 705--732.

\bibitem{Solberg2006}{\O}.\ Solberg, \emph{Support varieties for modules and complexes}, in \emph{Trends in representation theory of
algebras and related topics}, 239--270, Contemp.\ Math., 406, Amer.\ Math.\ Soc., Providence, RI, 2006.

\bibitem{Stanley2012}R.P.\ Stanley, \emph{Enumerative combinatorics. Volume 1}, Cambridge Stud.\ Adv.\ Math., 49, Cambridge University Press, Cambridge, 2012, xiv+626 pp.

\bibitem{Venkov1959}B.B.\ Venkov, \emph{Cohomology algebras for some classifying spaces (Russian)}, Dokl.\ Akad.\ Nauk SSSR
127 (1959), 943--944.

\bibitem{Venkov1961}B.B.\ Venkov, \emph{Characteristic classes for finite groups (Russian)}, Dokl.\ Akad.\ Nauk SSSR 137 (1961),
1274--1277.

\bibitem{Verdier1996}J.-L.\ Verdier, \emph{Des cat{\'e}gories d{\'e}riv{\'e}es des cat{\'e}gories ab{\'e}liennes}, Ast{\'e}risque (1996), no.\ 239, xii+253 pp.

\end{thebibliography}
\end{document}